\newtheorem*{theorem}{Theorem}
\newtheorem{lemma}{Lemma}
\theoremstyle{remark}
\newtheorem*{definitions}{Definitions}
\newfontfamily\cyrillicfont{STIX Two Text}
\begin{document}
\title{Axiomatic approach to the theory of algorithms and relativized computability\footnote{Vestnik Moskovskogo Universiteta.  (Bulletin of the Moscow State University). Ser. 1: Mathematics, Mechanics. 1980, no.2, p.~27--29. Translated by the author (2018)}}
\author{Alexander Shen}
\date{}

\maketitle

It is well known that many theorems in recursion theory can be ``relativized''. This means that they remain true if partial recursive functions are replaced by functions that are partial recursive relative to some fixed oracle set. Uspensky in~\cite{uspensky1974} formulates three ``axioms'' called ``axiom of computation records'', ``axiom of programs'' and ``arithmeticity axiom''. Then, using these axioms (more precisely, two first ones) he proves basic results of the recursion theory. These two axioms are true also for the class of functions that are partial recursive relative to some fixed oracle set. Also this class is closed under substitution, primitive recursion and minimization ($\mu$-operator); these (intuitively obvious) closure properties are also used in the proofs.  This observation made by Uspensky explains why many theorems of recursion theory can be relativized. It turns out that the reverse statement is also true: all relativizable results follow from the first two axioms and closure properties. Indeed, \emph{every class of partial functions that is closed under substitution, primitive recursion and minimization that satisfies the first two axioms is the class of functions that are partial recursive relative to some oracle set $A$}. This is the main result of the present article.
\medskip

Let $K$ be a class of partial functions with natural arguments and values. It may contain functions of different arity. Consider the following requirements for the class~$K$:

\begin{enumerate}
\item\label{closure} (Closure properties) The class $K$ contains all partial recursive functions and is closed under substitution, primitive recursion and $\mu$-operator.
\item\label{records} (Computation records) For every unary function $\in K$ there exists a set $M$ of natural numbers and functions $\alpha,\omega\in K$ whose domains contain $M$ such that
\begin{enumerate}
\item The indicator function of the set $M$ that is equal to $1$ on $M$ and is equal to $0$ outside $M$, belongs to $K$;
\item the value of $f$ on some $x$ is defined and equal to some $y$ if and only if there exists some $m\in M$ such that $\alpha(m)=x$ and $\omega(m)=y$. 

(Using the terminology from~\cite{uspensky1974}, one may say that $M$ is the set of all computation records for the function $f$ and all  possible inputs; the functions $\alpha$ and $\omega$ are defined on all computation records and extract the input and output respectively.)
\end{enumerate}
\item\label{programs} (Programs axiom) There exist a binary function $F\in K$ that is universal for the unary functions in $K$. This means that for every unary function $f\in K$ there exists some $n$ such that the function $F(n,\cdot)$ coincides with $f$. 
\end{enumerate}

\begin{theorem}
Let $K$ be a class that satisfies the conditions~\ref{closure}--\ref{programs}. Then there exists a set $A$ of natural numbers such that $K$ is the class of all functions whose graph is enumeration reducible to the graph of $f$.
\end{theorem}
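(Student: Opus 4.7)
The plan is to take the oracle $A$ to be the graph of the universal function $F$ of axiom~\ref{programs}, reduced to unary via pairing: after turning the binary $F$ into a unary function $F'(\langle n,x\rangle)=F(n,x)$ (which lies in $K$ by closure), I set $A=\{\langle c,y\rangle:F'(c)=y\}\subseteq\mathbb{N}$. With this choice the forward inclusion $K\subseteq\{g:\mathrm{graph}(g)\le_e A\}$ is essentially immediate: any unary $f\in K$ satisfies $f=F(n,\cdot)$ for some fixed index $n$ by axiom~\ref{programs}, and the r.e.\ procedure that on input $\langle x,y\rangle$ queries $A$ at the single point $\langle\langle n,x\rangle,y\rangle$ witnesses the reduction. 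Multi-ary $f\in K$ are handled by the same device via pairing.

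For the converse, fix $g$ whose graph is enumeration reducible to $A$ via an r.e.\ set $W$: $g(x)=y$ iff there exists a finite $D\subseteq A$ with $(\langle x,y\rangle,\mathrm{code}(D))\in W$. Since $W$ is r.e., its membership predicate is semi-decidable by a function in $K$ (axiom~\ref{closure} gives us all partial recursive functions). I would define $g$ by a $\mu$-search for a pair $(y,c)$ such that $(\langle x,y\rangle,c)\in W$ and $D_c\subseteq A$, and then project to $y$. For this to land inside $K$ by closure, it suffices to show that ``$D\subseteq A$'' — equivalently, the finite conjunction of statements of the form $F'(a_i)=b_i$ — is semi-decidable in $K$.

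To semi-decide ``$F'(a)=b$'' in $K$, I would apply axiom~\ref{records} to $F'$: this yields a set $M$ with indicator $\chi_M\in K$ together with $\alpha,\omega\in K$ defined on $M$ such that $F'(a)=b$ iff $\exists m\in M$ with $\alpha(m)=a$ and $\omega(m)=b$. The naive search $\mu m.[\alpha(m)=a\wedge\omega(m)=b]$ is dangerous because $\alpha,\omega$ are \emph{partial} and may diverge on $m\notin M$. I would sidestep this by first using $\chi_M$ and the $\mu$-operator to build an enumeration $e\in K$ of $M$ (set $e(0)=\mu m.\,\chi_M(m)=1$ and $e(n{+}1)=\mu m.(m>e(n)\wedge\chi_M(m)=1)$, legal by closure under primitive recursion and $\mu$). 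The search $\mu n.[\alpha(e(n))=a\wedge\omega(e(n))=b]$ then only ever evaluates $\alpha,\omega$ at points of $M$ and halts precisely when $F'(a)=b$. A finite conjunction of such semi-decidable predicates remains semi-decidable in $K$ by summing (or dovetailing) the individual halt-indicators, all of which are manufactured by closure.

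The main obstacle I anticipate is precisely this partiality subtlety: the axioms only promise $\alpha,\omega\in K$, not that they are total or even partial recursive, so the entire construction must be arranged so that no function in $K$ is ever invoked outside its domain. Once the ``semi-decidable in $K$'' calculus is in place (closure under finite conjunctions, composition and $\mu$-search, all inherited from axiom~\ref{closure}), both directions of the theorem reduce to a straightforward assembly of pairing, $\mu$, and the enumeration of $M$.
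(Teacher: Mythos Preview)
Your argument is essentially a direct proof of the paper's Lemma~1: $K$ coincides with the class of functions whose graph is enumeration reducible to the graph of the universal function. Where the paper establishes this by first building a small theory of ``$K$\nobreakdash-enumerable'' and ``$K$\nobreakdash-decidable'' sets (items~\textbf{1.1}--\textbf{2.5}) and then appealing to the abstract principle~\textbf{2.5} that anything enumeration-reducible to a $K$-enumerable set is itself $K$-enumerable (hence, by~\textbf{1.10}, the graph of a function in $K$), you instead carry out the reduction by an explicit $\mu$-search through the computation records. This is a legitimate alternative. One technical point to tighten: the outer search cannot literally be ``$\mu(y,c).[\ldots]$'' with a merely semi-decidable body, since the $\mu$-operator does not dovetail for you. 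You must fold the witnesses into the search variable and search for a tuple $(y,c,t,\vec n)$ such that $(\langle x,y\rangle,c)$ appears in $W$ by stage~$t$ and, for each $\langle a_i,b_i\rangle\in D_c$, $\alpha(e(n_i))=a_i$ and $\omega(e(n_i))=b_i$. That predicate is total in $K$ (every $e(n_i)$ lies in $M$, so $\alpha,\omega$ are defined there), and then the $\mu$-search is legal. Your ``summing the halt-indicators'' remark does not by itself achieve this.

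What your proposal does not address is the second half of the paper's proof. The intended statement (compare the introduction and the closing sentence of the proof) is that $K$ is the class of functions \emph{partial recursive relative to an oracle set}~$A$, i.e., whose graph is r.e.\ in~$A$, equivalently enumeration reducible to $A\oplus\overline A$. Your choice $A=\mathrm{graph}(F')$ only pins down an enumeration degree, and an arbitrary enumeration degree need not be total: the graph of a partial function is generally \emph{not} enumeration-equivalent to any $S\oplus\overline S$. The paper's Lemma~2 supplies exactly this missing step. Since $\mathrm{graph}(F)$ is $K$-enumerable, item~\textbf{1.9} writes it as the projection of some $K$-\emph{decidable} planar set~$S$; then both $S$ and $\overline S$ are $K$-enumerable (items~\textbf{1.3},~\textbf{1.4}), hence enumeration reducible to $\mathrm{graph}(F)$, while $\mathrm{graph}(F)$ is trivially enumeration reducible to~$S$ as its projection. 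Thus $\mathrm{graph}(F)$ is enumeration-equivalent to $S\oplus\overline S$, and the oracle promised by the theorem is~$S$, not $\mathrm{graph}(F)$. Your construction, as it stands, stops one lemma short.
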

\begin{proof}[Proof of the theorem]
Let $F$ be the binary universal function for $K$ that exists according to the condition~\ref{programs}.
\begin{lemma}\label{lemma1}
An arbitrary partial function $f$ belongs to $K$ if and only if $f$ is partial recursive relative to $F$.
\end{lemma}
\begin{proof}[Proof of the lemma~\ref{lemma1}]

(`Only if' part) If $f$ is a unary function, we use the universality condition. The general case can be reduced to the unary case using the recursive numbering of tuples.

(`If' part) For that we need first to prove some results mentioned above, i.e., to develop some fragment of the recursion theory based on axioms \ref{closure}--\ref{programs}.
\begin{definitions}
A. The set $X$ [of natural numbers] is called \emph{$K$-enumerable} if $X$ is empty or $X$ is the range of a total unary function from $K$. (One may omit the word `unary' without changing the class of $K$-enumerable sets.)

B. A set $X$ is \emph{$K$-decidable} if its indicator function belongs to $K$.
\end{definitions}
Using these definitions, one may easily prove the following statements:

\begin{description}
\item[1.1] Every recursively enumerable set is $K$-enumerable.
\item[1.2] Every decidable set is $K$-decidable.
\item[1.3] Every $K$-decidable set is $K$-enumerable.
\item[1.4] A set $X$ is $K$-decidable if and only if its complement $\overline X$ is $K$-decidable.
\item[1.5] A set $X$ is $K$-decidable if and only if both $X$ and its complement $\overline X$ are $K$-enumerable.
\item[1.6] The union, intersection and set difference of two $K$-decidable sets are $K$-decidable.
\item[1.7] The union and intersection of two $K$-enumerable sets are $K$-enumerable.
\item[1.8] The projection of a planar $K$-enumerable set  (i.e., a set of pairs whose codes in the standard recursive encoding of pairs form a $K$-enumerable set) is $K$-enumerable..
\item[1.9] Every $K$-enumerable set of natural numbers is a projection of some $K$-decidable planar set.
\item[1.10] If a partial function has a $K$-enumerable graph, then this function belongs to $K$.
\item[1.11] If a set  $X$ is $K$-enumerable (resp. $K$-decidable), then the set \[\{ x\mid \text{$x$th tuple has all elements on $X$}\}\] is $K$-enumerable (resp. $K$-decidable).
\end{description}
The proofs of 1.1--1.11 use only the closure property; the following results also use the computation records axiom.
\begin{description}
\item[2.1] The domain of every function from $K$ is $K$-enumerable.
\item[2.2] The range of every function from $K$ is $K$-enumerable.
\item[2.3] The graph of every function from $K$ is $K$-enumerable.
\item[2.4] The images and preimages of $K$-enumerable sets under functions from $K$ are $K$-enu\-merable.
\end{description}
Replacing recursive functions in the definition of enumeration reducibility (see~\cite[section~9.7, p.~146]{rogers1967} by functions from $K$, we get the definition of \emph{$K$-enumeration reducibility}.
\begin{description}
\item[2.5] The set that is $K$-enumeration reducible to a $K$-enumerable set is $K$-enumerable.
\end{description}
Now we return to the proof of the `if' part.  Since $F$ belongs to $K$, its graph is $K$-enumerable. Since the graph of $f$ is enumeration reducible to the graph of $F$, then the graph of $f$ is $K$-enumeration reducible to the graph of $F$. Then \textbf{2.5} says that the graph of $f$ is $K$-enumerable, and therefore $f\in K$. Lemma~\ref{lemma1} is proven.
\end{proof}
\begin{lemma}\label{lemma2}
Assume that for some set $M$ the class of all functions whose graphs are enumeration reducible to $M$ satisfies the conditions~\ref{closure}--\ref{programs}. Then there exists a set $S$ such that $M$ is enumeration equivalent \textup(=reducible in both directions\textup) to the set $S\oplus \overline S$. 
\end{lemma}
(Recall that by definition $A\oplus B$ is $\{2n\mid n\in A\}\cup\{2n+1\mid n\in B\}$.)
\begin{proof}[Proof of Lemma~\ref{lemma2}]
Let $L$ be the class mentioned in the statement of Lemma~\ref{lemma2}. The set $M$ is $L$-enumerable. Now we use that $L$ satisfies the conditions~\ref{closure}--\ref{programs} and note that \textbf{1.9} guarantees that $M$ is a projection of some $L$-deducible set $S$. The statements \textbf{1.3} and \textbf{1.4} guarantee that both $S$ and its complement $\overline S$ are $L$-enumerable. But $L$-enumerability is equivalent to the enumeration reducibility to $M$. Therefore $S$ is enumeration reducible to $M$, the complement of $S$ is enumeration reducible to $M$, and $M$ is enumeration reducible to $S$ (the projection of every set is enumeration reducible to the set itself).
\end{proof}

Now the statement of the theorem easily follows from the two lemmas. Here we use the following observation: the enumeration reducibility to the set $S\oplus \overline S$ is equivalent to recursive enumerability relative to $S$ (see \cite[p.~133]{rogers1967} for the definition). Theorem is proven.
\end{proof}

All three conditions used in the statement of the theorem, are essential. To see that this is the case, we may consider the following three examples of classes that satisfy only two of them and do not have the property mentioned in the theorem.
\begin{enumerate}
\item The class $K$ consists of a identity unary function, constant unary functions and all binary, ternary, etc. functions. It satisfies all the conditions except for the first one (closure).
\item The class $K$ consists of all functions that can be obtained by recursive operators \cite[\S9.8, p.~148]{rogers1967} from the function $\psi$ defined as in~\cite[Theorem XVIII, p.~280]{rogers1967}. Then all the conditions except for the second one (computation records) are satisfied.
\item The class $K$ consists of all partial functions. Then all the conditions except for the last one (programs axiom) are satisfied.
\end{enumerate}

\textbf{Acknowledgment}. The author thanks Vladimir Andreevich Uspensky who asked the question settled in this paper and for the research guidance.

\bigskip
\hbox to \textwidth{%
\hbox{}
\hfill
\hbox{\small Received May 10, 1978}
}
\par

\bigskip
\noindent
[Added when translating] One can also prove easily that axioms \ref{closure} and \ref{programs} alone describe the classes that are partial recursive closures of arbitrary functions. This means that

 (1) if $f$ is some function, that the minimal class of functions that is closed under substitution, recursion, minimization and contains all partial recursive functions and $f$, satisfies axioms \ref{closure} and \ref{programs};
 
(2) all classes that satisfy these two axioms can be obtained in this way for some $f$.
\end{document}